\DeclareFontFamily{U}{wncy}{}
\DeclareFontShape{U}{wncy}{m}{n}{<->wncyr10}{}
\DeclareSymbolFont{mcy}{U}{wncy}{m}{n}
\DeclareMathSymbol{\letterche}{\mathord}{mcy}{"71} 
\newtheorem{cor}{Corollary}
\newtheorem{thm}[cor]{Theorem}
\newtheorem{lem}[cor]{Lemma}
\newtheorem{prop}[cor]{Proposition}
\theoremstyle{remark}
\newtheorem{rem}[cor]{Remark}
\newcommand{\C}{\Omega}
\newcommand{\Cn}{\C_{n}(a)}
\newcommand{\CheOrd}{\letterche}
\newcommand{\Che}{\CheOrd_{p}}
\newcommand{\Z}{{\mathbb{Z}}}
\newcommand{\ord}{{\mathrm{ord }}}
\begin{document}
%\today	

\title{Zsigmondy's Theorem for Chebyshev polynomials}

\author{Stefan Bara\'{n}czuk} 
\address{Faculty of Mathematics and Computer Science, Adam Mickiewicz University\\ul. Uniwersytetu Pozna\'{n}skiego 4, 61-416 Pozna\'{n}, Poland}
\email{stefbar@amu.edu.pl}

\begin{abstract} For an integer $a$  consider the sequence $(T_{n}(a)-1)_{n=1}^{\infty}$ defined by the Chebyshev polynomials $T_{n}$. We list all pairs  $(n,a)$ for which the  term $T_{n}(a)-1$ has no primitive prime divisor. 
\end{abstract}

\keywords{Chebyshev polynomials; primitive prime divisors; Zsigmondy's theorem}

\subjclass[2010]{11B83 (11A41)}  

\maketitle

%\linenumbers

There is an intriguing link between the sequence of the power maps $x^{n}$ and the sequence of the Chebyshev polynomials $T_{n}(x)$, defined either by the property $T_{n}(\cos(\theta))=\cos(n\theta)$, or recursively $T_{0}(x)=1$, $T_{1}(x)=x$, $T_{n+2}(x)=2xT_{n+1}(x)-T_{n}(x)$ (our reference for the Chebyshev polynomials is \cite{Riv}).
For example, they both satisfy the composition identity, which we state here for the Chebyshev polynomials:
\[T_{n}(T_{m}(x))=T_{m}(T_{n}(x))=T_{mn}(x).\]
Furthermore, the celebrated Julia-Ritt result says that if two polynomials commute under composition, then either both are iterates of the same polynomial, or both are in a sense similar to either Chebyshev polynomials or power maps.

There are also some number theoretic properties shared by both sequences (see Section 5.3. in \cite{Riv}). In this paper we investigate such property -- namely, we prove the Chebyshev polynomials analogue of Zsigmondy's Theorem. 

Zsigmondy's Theorem says for which natural numbers $a,n>1$ there is a prime divisor $p$ of $a^{n}-1$ that does not divide any of the numbers $a^{d}-1$,  $d<n$ (such primes are called \textit{primitive prime divisors}) or, equivalently, there is a prime number $p$ such that the multiplicative order $\ord_{p}(a)$ equals $n$. 

The above mentioned similarities between the power maps and the Chebyshev polynomials evoke the question whether we could replace $a^{n}$ in Zsigmondy's Theorem by $T_{n}(a)$. Our answer is  Theorem \ref{Satz 2}; in order to formulate it we have to introduce the following Chebyshevian analogue of the multiplicative order. For an integer $a$ and a prime number $p$ denote 
\[\Che (a)=\min \left\lbrace m\in\Z_{>0}\, \colon \, T_{m}(a) \equiv  1 \mod p \right\rbrace ;\]
this quantity always exists by Lemma \ref{LFT}.

\begin{thm}\label{Satz 2}
	Let $n$ and $a$  be integers such that $n>0$.  There exists a prime number $p$ such that $n=\Che(a)$,
	%$T_{n}(a) = 1 \mod p$ and $T_{k}(a) \ne 1 \mod p$ for every positive integer $k<n$, 
	except in the following cases:
	\begin{itemize}
		\item $a=1$ and $n>1$,
		\item $a=0$ and $n\notin\left\lbrace 2,4\right\rbrace $,
		\item $a=-1$ and $n>2$,
		\item $n=1$ and $a\in \left\lbrace0,2 \right\rbrace $,
		\item $n=2$ and $a=\pm 2^{\alpha-1}-1$,
		\item $n=3$ and $a=\tfrac{1}{2}(\pm 3^{\alpha}-1)$,
		\item $n=4$ and $a=\pm 2^{\alpha-1}$,
		\item $n=6$ and $a=\tfrac{1}{2}(\pm 3^{\alpha}+1)$,
	\end{itemize} 	
where $\alpha$ runs through positive integers.
\end{thm}
The proof, that occupies the reminder of the article, takes as a model the $\textit{einfacher Bewais}$ of Zsigmondy's Theorem presented in \cite{HL}.
% (compare our Theorem \ref{Satz 1} with Satz 1). 

\begin{prop}\label{RivExer} (Exercise 1.1.5 in \cite{Riv}) If $a,b$ are nonnegative integers, then
\[(T_{a+b}(x)-1)(T_{\left|a-b \right| }(x)-1)=(T_{a}(x)-T_{b}(x))^2.\]
\end{prop}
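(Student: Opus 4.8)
The plan is to verify the identity after a substitution that trivialises the Chebyshev polynomials, and then invoke the fact that two polynomials agreeing on infinitely many points coincide. Concretely, write $x=\cos\theta$ with $\theta\in[0,\pi]$, so that $T_n(x)=\cos(n\theta)$ for every $n$; the claimed identity then reads
\[\bigl(\cos((a+b)\theta)-1\bigr)\bigl(\cos(|a-b|\theta)-1\bigr)=\bigl(\cos(a\theta)-\cos(b\theta)\bigr)^2.\]
Applying the half-angle identity $\cos\varphi-1=-2\sin^2(\varphi/2)$ to each factor on the left, and the product-to-sum identity $\cos\alpha-\cos\beta=-2\sin\!\bigl(\tfrac{\alpha+\beta}{2}\bigr)\sin\!\bigl(\tfrac{\alpha-\beta}{2}\bigr)$ on the right, both sides collapse to $4\sin^2\!\bigl(\tfrac{(a+b)\theta}{2}\bigr)\sin^2\!\bigl(\tfrac{(a-b)\theta}{2}\bigr)$, the sign of $a-b$ being irrelevant since it enters only through a square. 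As this holds for all $\theta$ in an interval, hence for infinitely many values of $x$, and both sides of the asserted equation are polynomials in $x$, the identity holds in $\Z[x]$.

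Alternatively — and perhaps more in keeping with the arithmetic that follows — one argues purely algebraically via the substitution $x=\tfrac12(z+z^{-1})$, under which $T_n(x)=\tfrac12(z^n+z^{-n})$. Then
\[T_n(x)-1=\frac{z^{2n}-2z^n+1}{2z^n}=\frac{(z^n-1)^2}{2z^n},\]
while, assuming without loss of generality $a\ge b$,
\[T_a(x)-T_b(x)=\frac{z^{2a}-z^{a+b}-z^{a-b}+1}{2z^a}=\frac{(z^{a+b}-1)(z^{a-b}-1)}{2z^a}.\]
Multiplying the first formula at indices $a+b$ and $a-b$ and comparing with the square of the second yields the identity as an equality of Laurent polynomials in $z$, which then descends to the polynomial identity in $x$ via $x=\tfrac12(z+z^{-1})$.

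I expect no genuine obstacle. The only points deserving a word of care are the passage from "equality on infinitely many points" (respectively, "equality of Laurent polynomials in $z$") to "equality in $\Z[x]$", and the remark that replacing $a-b$ by $|a-b|$ changes nothing because $a-b$ occurs only squared — equivalently, the statement is symmetric in $a$ and $b$, so one may assume $a\ge b$ from the outset.
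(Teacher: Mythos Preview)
Your proof is correct; both the trigonometric and the Laurent-polynomial arguments go through cleanly, and the care you take with the passage from identity-on-an-interval to polynomial identity is appropriate. Note, however, that the paper does not actually supply a proof of this proposition: it is simply quoted as Exercise~1.1.5 from Rivlin's book and used as a black box thereafter. So there is nothing in the paper to compare your argument against --- you have filled in what the author left to the reader, and either of your two routes would serve.
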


%\begin{prop}\label{T on x plus one} For every natural number $n$ and every real number $x$
% \[T_{n}(x+1)=1+n^2x+\ldots\]\end{prop}

The following lemma is the analogue of Fermat's little theorem for Chebyshev polynomials.
\begin{lem}\label{LFT} Let $p$ be an odd prime number. For every integer $x$
	\[ T_{p-1}(x)\equiv 1 \mod p \text{  \,\,\, or \,\,\,  }  T_{p+1}(x)\equiv 1 \mod p.\]
For every integer $x$	
\[ T_{2}(x)\equiv 1 \mod 2. \]
\end{lem}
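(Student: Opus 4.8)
The plan is to treat the odd prime case by combining the congruence $T_{p}(x)\equiv x\pmod p$ with the factorization identity of Proposition~\ref{RivExer}, and to settle $p=2$ by direct computation.

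First I would establish that $T_{p}(x)\equiv x\pmod p$ for every $x\in\N$. Starting from the closed form $2T_{p}(x)=\bigl(x+\sqrt{x^{2}-1}\bigr)^{p}+\bigl(x-\sqrt{x^{2}-1}\bigr)^{p}$, which is a genuine polynomial identity (the odd powers of $\sqrt{x^{2}-1}$ cancel in the sum), I expand both $p$-th powers by the binomial theorem; the surviving terms are indexed by the even $j$ with $0\le j\le p$. Reducing modulo $p$, every coefficient $\binom{p}{j}$ with $0<j<p$ vanishes, and since $p$ is odd the index $j=p$ does not occur among the even indices, so only $j=0$ contributes: $2T_{p}(x)\equiv 2x^{p}\pmod p$. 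As $p$ is odd, $2$ is invertible modulo $p$, whence $T_{p}(x)\equiv x^{p}\pmod p$; evaluating at $x\in\N$ and applying Fermat's little theorem gives $T_{p}(x)\equiv x\pmod p$, i.e. $p\mid T_{p}(x)-T_{1}(x)$.

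Next I would apply Proposition~\ref{RivExer} with $a=p$ and $b=1$:
\[(T_{p+1}(x)-1)\,(T_{p-1}(x)-1)=\bigl(T_{p}(x)-T_{1}(x)\bigr)^{2}.\]
By the previous paragraph the right-hand side is divisible by $p$, so, $p$ being prime, it divides $T_{p+1}(x)-1$ or $T_{p-1}(x)-1$; this is the first assertion. For $p=2$ the recursion gives $T_{2}(x)=2x^{2}-1$, hence $T_{2}(x)\equiv 1\pmod 2$ for all $x$, which is the second assertion. The only step that is not pure bookkeeping is the congruence $T_{p}(x)\equiv x\pmod p$, so that is where I would concentrate the care — in particular keeping the binomial expansion as an identity of polynomials until the reduction modulo $p$ is performed, and noting that the parity of $p$ is exactly what excludes the top term $j=p$.
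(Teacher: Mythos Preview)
Your argument is correct and follows essentially the same route as the paper: the congruence $T_{p}(x)\equiv T_{1}(x)\pmod p$ combined with Proposition~\ref{RivExer} at $(a,b)=(p,1)$, and a direct check of $T_{2}$. The only difference is that the paper simply cites Rivlin for $T_{p}(x)\equiv T_{1}(x)\pmod p$, whereas you supply a self-contained derivation via the closed form and the binomial expansion; your observation that the oddness of $p$ is precisely what rules out the top index $j=p$ is spot on.
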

	
\begin{proof}
	 If $p$ is an odd prime then we have (cf. (5.32) in \cite{Riv})
	 \[T_{p}(x)\equiv T_{1}(x) \mod p\]
and  
\[(T_{p+1}(x)-1)(T_{p-1}(x)-1)=(T_{p}(x)-T_{1}(x))^2\]	
by Proposition \ref{RivExer}.

For every integer $x$ 
\[T_{2}(x)=2x^{2}-1 \equiv 1 \mod 2.\]
\end{proof}

\begin{lem}\label{ChebOrd}
	Let $p$ be a prime number and $x$ be an integer. Let $m$ be the smallest positive integer such that $T_{m}(x) \equiv  1 \mod p$. 
	Then $T_{n}(x) \equiv  1 \mod p$ for a positive integer $n$ if and only if $m \mid n$. 
\end{lem}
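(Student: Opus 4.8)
The plan is to run the classical proof that the multiplicative order divides every exponent realizing the identity, transported to the Chebyshev world through the composition identity $T_{k}(T_{\ell}(x))=T_{k\ell}(x)$ and Proposition \ref{RivExer}. Two trivial preliminaries I would record first: each $T_{k}$ has integer coefficients, so $u\equiv v \mod p$ implies $T_{k}(u)\equiv T_{k}(v) \mod p$; and $T_{k}(1)=1$ for every $k\ge 0$, immediate by induction from the recursion ($T_{0}(1)=T_{1}(1)=1$ and $T_{k+2}(1)=2\cdot 1\cdot T_{k+1}(1)-T_{k}(1)=1$). The ``if'' direction is then immediate: if $m\mid n$, say $n=k m$, then $T_{n}(x)=T_{k}(T_{m}(x))\equiv T_{k}(1)=1 \mod p$ because $T_{m}(x)\equiv 1 \mod p$.

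For the converse I would pass to the set $S=\{\,n\in\Z:\ T_{\lvert n\rvert}(x)\equiv 1 \mod p\,\}$ (reading $T_{-k}$ as $T_{k}$), which contains $0$, is symmetric, and contains the given positive integer $m$; it suffices to show $S$ is a subgroup of $(\Z,+)$, since then $S=m\Z$ with $m$ its least positive element, which is exactly the assertion. By symmetry it is enough to prove that for nonnegative $a,b\in S$ both $a+b$ and $\lvert a-b\rvert$ lie in $S$ (general signs reduce to this because $\lvert a+b\rvert$ and $\lvert a-b\rvert$ each equal $\lvert a\rvert+\lvert b\rvert$ or $\bigl\lvert\,\lvert a\rvert-\lvert b\rvert\,\bigr\rvert$). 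Applying Proposition \ref{RivExer} to $a,b$ gives
\[(T_{a+b}(x)-1)(T_{\lvert a-b\rvert}(x)-1)=(T_{a}(x)-T_{b}(x))^{2}\equiv 0 \mod p.\]
This alone only says $p$ divides one of the two factors, which is too weak. To strengthen it I would apply Proposition \ref{RivExer} a second time, to the pair $(a+b,\ \lvert a-b\rvert)$, whose sum is $2\max(a,b)$ and whose absolute difference is $2\min(a,b)$:
\[(T_{2\max(a,b)}(x)-1)(T_{2\min(a,b)}(x)-1)=(T_{a+b}(x)-T_{\lvert a-b\rvert}(x))^{2}.\]
Since $T_{2a}(x)=2T_{a}(x)^{2}-1\equiv 1 \mod p$ (composition with $T_{2}(y)=2y^{2}-1$), and likewise with $b$, the left-hand side vanishes mod $p$, so $T_{a+b}(x)\equiv T_{\lvert a-b\rvert}(x) \mod p$. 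Substituting this common value $t$ into the first identity gives $(t-1)^{2}\equiv 0 \mod p$, hence $t\equiv 1 \mod p$ as $p$ is prime; thus $a+b\in S$ and $\lvert a-b\rvert\in S$.

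I expect the one genuine difficulty to be precisely this passage from ``$p$ divides a product'' to ``$p$ divides each factor'': Proposition \ref{RivExer} by itself yields only a disjunction, and the point is that a second application of the same identity turns both factors into squares that are forced to vanish. (One could instead quote the product-to-sum formula $2T_{a}(x)T_{b}(x)=T_{a+b}(x)+T_{\lvert a-b\rvert}(x)$, but it is tidier to stay within the apparatus already assembled in the paper.) Once $S$ is closed under $+$ and negation, the classification of subgroups of $\Z$ closes the argument; equivalently, running the Euclidean algorithm on $m$ and any $n\in S$ shows $\gcd(m,n)\in S$, so $\gcd(m,n)\ge m$ by minimality of $m$, forcing $m\mid n$.
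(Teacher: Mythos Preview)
Your argument is correct. The ``if'' direction matches the paper exactly. For the converse you build the subgroup $S\subset\Z$ and need two applications of Proposition~\ref{RivExer}: once to get $(T_{a+b}-1)(T_{|a-b|}-1)\equiv 0$, and a second time (on the pair $(a+b,|a-b|)$) to upgrade this disjunction to $T_{a+b}\equiv T_{|a-b|}$, whence $(t-1)^2\equiv 0$.

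The paper's proof is shorter because it avoids the disjunction altogether by a better choice of the pair. Writing $n=km+r$ with $0\le r<m$ and applying Proposition~\ref{RivExer} to $a=km$, $b=r$ gives
\[(T_{n}(x)-1)(T_{|km-r|}(x)-1)=\bigl(T_{km}(x)-T_{r}(x)\bigr)^{2}.\]
Here two ingredients are already known: $T_{n}(x)\equiv 1$ by hypothesis, so the left side is $\equiv 0$; and $T_{km}(x)\equiv 1$ by the ``if'' direction, so the right side is $\equiv (T_{r}(x)-1)^{2}$. Thus a single application yields $(T_{r}(x)-1)^{2}\equiv 0\pmod p$, hence $T_{r}(x)\equiv 1$, contradicting minimality unless $r=0$. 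In other words, the ``genuine difficulty'' you flag is an artifact of aiming for full additive closure of $S$; the paper only needs the special step from $km$ and $n$ down to the remainder $r$, and for that one invocation of the identity suffices. Your route buys a cleaner structural statement ($S$ is a subgroup), at the cost of an extra use of the identity.
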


\begin{proof} ($\Leftarrow$) For every $k$ we have $T_{k}(1)=1$. Thus if $n=km$ then  $T_{n}(x)=T_{k}(T_{m}(x)) \equiv T_{k}(1) \equiv  1 \mod p$ by  the composition identity.
	
($\Rightarrow$)	Let $r=n-km$ be the remainder obtained upon dividing $n$ by $m$. In particular, $r<m$. Suppose $r> 0$.
Putting $a=km$ and $b=r$ in Proposition \ref{RivExer} we get
\[(T_{n}(x)-1)(T_{\left|km-r \right| }(x)-1)=\left( (T_{km}(x)-1)-(T_{r}(x)-1)\right) ^2.\]
Arguing as in the ($\Leftarrow$) part of the proof we get $T_{km}(x) - 1\equiv 0 \mod p$. Since  $T_{n}(x) - 1 \equiv 0 \mod p$, we have $T_{r}(x) - 1\equiv 0 \mod p$ by the above identity. This contradicts the minimality of $m$.
\end{proof}

Lemmas \ref{LFT} and \ref{ChebOrd} together with $T_{1}(x)=x$  give the following.

\begin{lem}\label{prop Che}
	If $x$ is an integer and $p$ is an odd prime number then $\Che (x)$ divides $p-1$ or $p+1$. In particular, $\Che (x)$ and $p$ are coprime.
	If $x$ is odd then $\CheOrd_{2}(x)=1$. If $x$ is even then $\CheOrd_{2}(x)=2$.
\end{lem}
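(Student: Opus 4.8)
The plan is to prove Lemma \ref{prop Che} as an immediate consequence of Lemmas \ref{LFT} and \ref{ChebOrd}, exactly as the phrase ``we immediately get the following'' suggests. First I would treat the odd prime case. By Lemma \ref{LFT}, for every $x \in \N$ we have $T_{p-1}(x) \equiv 1 \bmod p$ or $T_{p+1}(x) \equiv 1 \bmod p$. Letting $m = \Che(x)$ be the minimal positive integer with $T_m(x) \equiv 1 \bmod p$, Lemma \ref{ChebOrd} tells us that $m$ divides any $n$ with $T_n(x) \equiv 1 \bmod p$; applying this to $n = p-1$ or $n = p+1$ gives $m \mid p-1$ or $m \mid p+1$. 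For the coprimality claim, note that $p-1$ and $p+1$ are both coprime to $p$ (any common divisor of $p$ and $p\pm 1$ divides their difference, which is $1$), hence so is any divisor $m$ of one of them.

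For the prime $p = 2$, I would invoke the second assertion of Lemma \ref{LFT}, namely $T_2(x) \equiv 1 \bmod 2$ for all $x \in \N$; by Lemma \ref{ChebOrd} this forces $\CheOrd_2(x) \mid 2$, so $\CheOrd_2(x) \in \{1, 2\}$. It remains to decide which value occurs. Since $\CheOrd_2(x) = 1$ means $T_1(x) = x \equiv 1 \bmod 2$, i.e.\ $x$ is odd, the case $\CheOrd_2(x) = 1$ happens precisely when $x$ is odd, and otherwise (i.e.\ $x$ even, where $T_1(x) = x \equiv 0 \not\equiv 1 \bmod 2$) we must have $\CheOrd_2(x) = 2$. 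One can double-check the even case directly: $T_2(x) = 2x^2 - 1 \equiv 1 \bmod 2$ always, so $2$ is a valid exponent, confirming $\CheOrd_2(x) = 2$.

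There is essentially no obstacle here; the content is entirely packaged in the two preceding lemmas, and the only mild subtlety is the elementary observation that a divisor of $p \pm 1$ is automatically coprime to $p$. I would write the proof in three or four short sentences.

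\begin{proof}
	Let $p$ be an odd prime. By Lemma \ref{LFT} we have $T_{p-1}(x) \equiv 1 \bmod p$ or $T_{p+1}(x) \equiv 1 \bmod p$, so by Lemma \ref{ChebOrd} the integer $m = \Che(x)$ divides $p-1$ or $p+1$. Since any common divisor of $p$ and $p \pm 1$ divides $1$, it follows that $m$ and $p$ are coprime.

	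For $p = 2$, Lemma \ref{LFT} gives $T_2(x) \equiv 1 \bmod 2$ for every $x \in \N$, so $\CheOrd_2(x) \mid 2$ by Lemma \ref{ChebOrd}, whence $\CheOrd_2(x) \in \{1,2\}$. Now $\CheOrd_2(x) = 1$ if and only if $T_1(x) = x \equiv 1 \bmod 2$, that is, if and only if $x$ is odd; hence if $x$ is even then $\CheOrd_2(x) = 2$.
\end{proof}
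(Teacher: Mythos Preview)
Your proof is correct and is exactly the argument the paper has in mind: the lemma is stated without proof, prefaced only by ``We immediately get the following'' after Lemmas \ref{LFT} and \ref{ChebOrd}, and your derivation from those two lemmas is the intended one.
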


The key tool of the proof of Zsigmondy's Theorem is the factorization of polynomials $x^{n}-1$ into cyclotomic polynomials (our reference for them is \cite{HLB}). The following lemma describes its analogue for Chebyshev polynomials.

\begin{lem}\label{inrtocyc}
	For every $n\ge 1$
	\[T_{n}(x)-1=\prod_{d \mid n}\C_{d}^{\sigma_{d}}(x)\]
	where $\C_{1}(x)=x-1$ and for $d\ge 2$
	\[\C_{d}(x)=\prod_{
	\substack{
	1\le k \le \frac{d}{2}\\
	\gcd(k,d)=1}}2(x-\cos\tfrac{2 k \pi}{d})\]
and \[\sigma_{d}= \left\{
\begin{array}{rl}
1 & \text{if } d=1,2,\\
2 & \text{if } d >2.
\end{array} \right.\]
\end{lem}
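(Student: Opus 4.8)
The plan is to prove the factorization by induction on $n$, using the known factorization of the Chebyshev polynomial $T_n(x) - 1$ over $\mathbb{C}$ into linear factors and then regrouping those factors according to the "denominator" of the angle. Recall the trigonometric identity $T_n(\cos\theta) = \cos(n\theta)$, so that $T_n(x) - 1 = 0$ precisely when $x = \cos\theta$ with $\cos(n\theta) = 1$, i.e. $n\theta \in 2\pi\mathbb{Z}$, i.e. $\theta = 2k\pi/n$ for $k = 0, 1, \dots, n-1$. The distinct roots are $x = \cos(2k\pi/n)$ for $0 \le k \le \lfloor n/2 \rfloor$; the root $x=1$ (from $k=0$) is simple, the root $x=-1$ (from $k = n/2$, when $n$ is even) is simple, and every other root $\cos(2k\pi/n)$ with $0 < k < n/2$ occurs with multiplicity $2$ since $k$ and $n-k$ give the same cosine. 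Since $T_n$ has leading coefficient $2^{n-1}$ for $n \ge 1$, this pins down $T_n(x) - 1$ completely as a product of linear factors with the stated multiplicities and a matching power of $2$.

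Next I would organize the roots by the exact order of the corresponding root of unity. Writing $\theta = 2k\pi/n$ and setting $d = n/\gcd(k,n)$, the value $\cos(2k\pi/n)$ depends only on the pair $\{k/\gcd, d\}$, and as $k$ runs over $1 \le k \le n$ the reduced fractions $k'/d$ with $\gcd(k',d) = 1$ and $d \mid n$ are hit; restricting to $1 \le k \le n/2$ (to avoid the cosine double-count) corresponds to $1 \le k' \le d/2$. Thus the set of roots of $T_n(x)-1$, with multiplicity, is exactly $\bigcup_{d \mid n}$ (the roots $\cos(2k'\pi/d)$, $\gcd(k',d)=1$, $1 \le k' \le d/2$), each such root appearing with multiplicity $\sigma_d$ (namely $1$ if $d \in \{1,2\}$ and $2$ otherwise, matching the simple/double behaviour noted above — for $d=1$ the root is $1$, for $d=2$ the root is $-1$). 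This is precisely the claim that $\Omega_d$ collects the "primitive" roots of level $d$, and $\Omega_d(x) = \prod_{\gcd(k,d)=1, 1\le k \le d/2} 2(x - \cos(2k\pi/d))$ for $d \ge 2$ while $\Omega_1(x) = x - 1$.

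It then remains to check the power of $2$: the right-hand side $\prod_{d\mid n}\Omega_d^{\sigma_d}(x)$ contributes a factor $2$ for each linear factor coming from some $\Omega_d$ with $d \ge 2$, counted with multiplicity $\sigma_d$, plus no factor of $2$ from $\Omega_1$. The total number of such linear factors, counted with multiplicity, is $\big(\sum_{d \mid n, d \ge 2}\sigma_d \cdot \#\{k : 1 \le k \le d/2, \gcd(k,d)=1\}\big)$, which together with the single factor from $\Omega_1$ accounts for all $\deg(T_n) = n$ roots of $T_n(x) - 1$ counted with multiplicity; hence the number of linear factors carrying a $2$ is $n - 1$, giving leading coefficient $2^{n-1}$, in agreement with the known leading coefficient of $T_n$. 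So the two sides agree as monic-up-to-the-same-constant polynomials with the same roots and multiplicities, hence are equal. Alternatively, and perhaps more cleanly for the write-up, one can run the induction: assuming the formula for all proper divisors' worth of data is not quite the right framing, so instead I would argue directly as above, or note $T_n(x) - 1 = \Omega_n^{\sigma_n}(x)\prod_{d \mid n, d < n}\Omega_d^{\sigma_d}(x)$ and define $\Omega_n$ by this relation, then verify by the root computation that this definition matches the displayed product formula.

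The main obstacle I expect is purely bookkeeping: getting the power of $2$ exactly right and handling the small cases $d = 1$ and $d = 2$ (the endpoints $k = 0$ and $k = n/2$ of the range) separately, since these are the roots $x = 1$ and $x = -1$ that occur with multiplicity one rather than two — this is exactly why $\sigma_1 = \sigma_2 = 1$ while $\sigma_d = 2$ for $d > 2$, and why $\Omega_1(x) = x-1$ is not given the leading coefficient $2$. Once the multiplicity count is pinned down by the double-counting of cosines $\cos(2k\pi/d) = \cos(2(d-k)\pi/d)$, everything else follows from matching roots and leading coefficients of polynomials of the same degree.
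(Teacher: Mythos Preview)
Your approach is essentially the paper's: both arguments identify the roots of $T_n(x)-1$ with multiplicity via the trigonometric description, the paper phrasing this as ``the local maxima of $T_n$ are exactly at $\cos\tfrac{2k\pi}{n}$, $1\le k<\tfrac{n}{2}$, and all have value $1$'' (citing Rivlin), while you spell out the regrouping by primitive denominator $d$ and the leading-coefficient count that the paper leaves implicit. One small point to tighten: the sentence ``occurs with multiplicity $2$ since $k$ and $n-k$ give the same cosine'' does not by itself establish multiplicity $2$ --- two parameter values hitting the same $x$ is not the same as a double root --- so you should either invoke that these interior points are local maxima of $T_n$ (hence $T_n'$ vanishes there too), or first prove the identity $T_n(x)-1=2^{\,n-1}\prod_{k=0}^{n-1}\bigl(x-\cos\tfrac{2k\pi}{n}\bigr)$ directly (e.g.\ via $x=\tfrac{z+z^{-1}}{2}$); after that your degree and leading-coefficient bookkeeping goes through exactly as written. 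Also, the opening mention of ``induction on $n$'' is never used --- your argument is direct, and that is fine.
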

\begin{proof}
	For every $n\ge 1$ the local maxima of $T_{n}(x)$ are exactly at $\cos\tfrac{2 k \pi}{n}$, $1\le k < \frac{n}{2}$, and they all are roots of multiplicity $2$ of $T_{n}(x)-1$. Besides those points, $T_{n}(x)=1$ only for $x=1$ and arbitrary $n$, and for $x=-1$ and even $n$ (see Section 1.2. in \cite{Riv}). 
\end{proof}

The significance of $\C_{n}(x)$ can be seen at a glance: it is exactly the factor that distinguishes $T_{n}(x)-1$ from all $T_{d}(x)-1$, $d\mid n$, $d<n$; see Corollary \ref{if T then Omega}.
%We list the needed properties of  $\C_{n}(x)$ in the following proposition. 

\begin{prop}\label{omegas prop b} Let $m, n$ be positive integers. Then the polynomial $\C_{mn}$ is a divisor of $\C_{n}(T_{m}(x))$.
If moreover $n\ge 3$ and every prime divisor of $m$ divides $n$, then $\C_{mn}=\C_{n}(T_{m}(x))$.
\end{prop}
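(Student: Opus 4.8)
The plan is to read off and match the roots of the two polynomials, using the defining property $T_m(\cos\theta)=\cos(m\theta)$. First I would dispose of the easy cases. If $m=1$, both sides are literally $\C_n(x)$. If $n=1$, then $\C_1(T_m(x))=T_m(x)-1=\prod_{d\mid m}\C_d^{\sigma_d}(x)$ by Lemma~\ref{inrtocyc}, and $\C_m=\C_{mn}$ occurs among the factors. If $n=2$, then
\[\C_2(T_m(x))=2\bigl(T_m(x)+1\bigr)=\frac{T_{2m}(x)-1}{T_m(x)-1}=\prod_{\substack{d\mid 2m\\ d\nmid m}}\C_d^{\sigma_d}(x),\]
again by Lemma~\ref{inrtocyc}, and $d=2m$ occurs among these. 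So from now on I may assume $n\ge 3$ (the case $m=1$ being settled as well); then $\C_n$, and a fortiori $\C_{mn}$, is given by the product formula of Lemma~\ref{inrtocyc}.

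For $n\ge 3$ I would first determine $\C_n(T_m(x))$ completely. Since $\cos\tfrac{2j\pi}{n}\neq\pm1$ whenever $\gcd(j,n)=1$, each equation $T_m(x)=\cos\tfrac{2j\pi}{n}$ has $m$ distinct solutions, and solutions belonging to different $j$ are distinct; hence $\C_n(T_m(x))$ is separable, and from $T_m\!\left(\cos\tfrac{2\pi s}{mn}\right)=\cos\tfrac{2\pi s}{n}$ together with the fact that $\{\pm j\bmod n:\ 1\le j\le n/2,\ \gcd(j,n)=1\}$ is exactly the unit group of $\Z/n$, its root set is $\bigl\{\cos\tfrac{2\pi s}{mn}:\ \gcd(s,n)=1\bigr\}$ (taken in $[-1,1]$, i.e. with $1\le s\le mn/2$). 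Next I would sort these roots by order: the point $\cos\tfrac{2\pi s}{mn}$ is a root of $\C_e$ with $e=mn/\gcd(s,mn)$, and the condition $\gcd(s,n)=1$ says exactly, prime by prime, that $v_p(e)=v_p(mn)$ for every prime $p\mid n$; conversely every root of such a $\C_e$ is of this form. Setting $\mathcal E=\{\,e\mid mn:\ v_p(e)=v_p(mn)\text{ for all primes }p\mid n\,\}$ and comparing root sets and leading coefficients — the left side has leading coefficient $2^{\phi(n)/2}(2^{m-1})^{\phi(n)/2}=2^{m\phi(n)/2}$, the right side $\prod_{e\in\mathcal E}2^{\phi(e)/2}$, and the identity $\sum_{e\in\mathcal E}\phi(e)=m\phi(n)$ makes the two match — one obtains
\[\C_n(T_m(x))=\prod_{e\in\mathcal E}\C_e(x).\]

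Since $mn\in\mathcal E$ always, this gives $\C_{mn}(x)\mid \C_n(T_m(x))$ at once, which together with the easy cases proves the first assertion. For the second, note that $\mathcal E$ consists of the divisors of $mn$ obtained by lowering the exponents of the primes that divide $m$ but not $n$; hence $\mathcal E=\{mn\}$ exactly when no prime divides $m$ without dividing $n$, and then the displayed identity becomes $\C_n(T_m(x))=\C_{mn}(x)$. The step I expect to be the real work is this clean factorization: proving the separability of $\C_n(T_m(x))$ — which truly needs $n\ge 3$, since for $n=2$ repeated factors occur (for instance $\C_2(T_4(x))=\C_8(x)^2$), and this is precisely why the small cases are removed first — and carrying out the elementary but slightly fiddly $p$-adic bookkeeping that matches each admissible $s$ to the order $e$ of $\cos\tfrac{2\pi s}{mn}$ and pins down $\mathcal E$. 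The leading-coefficient identity $\sum_{e\in\mathcal E}\phi(e)=m\phi(n)$ itself drops out by writing $mn=AB$, where $A$ is the part of $mn$ supported on the primes of $n$ and $B$ the part supported on the remaining primes of $m$, so that $\mathcal E=\{Ab:\ b\mid B\}$ and $\sum_{b\mid B}\phi(Ab)=\phi(A)\,B=m\,\phi(n)$.
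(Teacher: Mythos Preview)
Your argument is correct and in fact proves more than the proposition asks: for $n\ge 3$ you obtain the complete factorization $\C_n(T_m(x))=\prod_{e\in\mathcal E}\C_e(x)$, from which both assertions drop out. The paper's route is considerably shorter and avoids the case split. For divisibility it simply observes that every zero $\alpha=\cos\tfrac{2k\pi}{mn}$ of $\C_{mn}$ (with $\gcd(k,mn)=1$) satisfies $T_m(\alpha)=\cos\tfrac{2k\pi}{n}$, which is a zero of $\C_n$; since $\C_{mn}$ has only simple zeros, $\C_{mn}\mid\C_n(T_m)$ follows at once, uniformly in $m,n$. For the equality under the extra hypothesis it merely compares degree and leading coefficient: both polynomials have degree $\tfrac12\varphi(mn)=\tfrac12 m\varphi(n)$ (using $\varphi(mn)=m\varphi(n)$ when every prime of $m$ divides $n$) and leading coefficient $2$ raised to that same power, so the already-established divisibility forces equality. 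No enumeration of the full root set of $\C_n(T_m)$, no separability check, and no $p$-adic bookkeeping are needed. What your approach buys is the explicit description of all the factors of $\C_n(T_m)$ in the general case; what the paper's approach buys is brevity, since only an inclusion of root sets and a degree count are required for the statement at hand.
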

\begin{proof}
	Let $\alpha$ be a zero of $\C_{mn}$. We have $\alpha=\cos\tfrac{2 k \pi}{mn}$ for some $k$ coprime to $mn$, with $1\le k \le \frac{mn}{2}$. Since $T_{m}(\cos(\theta))=\cos(m\theta)$, we get $T_{m}(\alpha)=\cos\tfrac{2 k \pi}{n}=\cos\tfrac{2 \bar{k} \pi}{n}=\cos\tfrac{2 (n-\bar{k}) \pi}{n}$, where $\bar{k}$ denotes $k \bmod n$. Since both the numbers $\bar{k}$ and $n-\bar{k}$ are coprime to $n$, and one of them is less than or equal to  $\tfrac{n}{2}$, we get that $T_{m}(\alpha)$ is a zero of $\C_{n}$. So all zeros of $\C_{mn}$ are zeros of $\C_{n}(T_{m}(x))$. Since $\C_{mn}$ has only simple zeros, we get that $\C_{mn}$ is a divisor of $\C_{n}(T_{m}(x))$.
	
	Now suppose that $n\ge 3$ and every prime divisor of $m$ divides also $n$. 
	If $d\ge 3$ then the degree of $\C_{d}$ is ${\varphi(d)}/{2}$ and its leading coefficient is $2^{{\varphi(d)}/{2}}$.  If every prime divisor of $m$ divides also $n$, then $\varphi(mn)=m\varphi(n)$. Thus elementary computations show that $\C_{mn}$ and  $\C_{n}(T_{m}(x))$ have the same degree and the same leading coefficient.
	\end{proof}

\begin{prop}\label{omega odd} Let $n$ be an odd natural number. Then $\C_{n}(0)=\pm 1$. If moreover $n\ge 3$ then $\C_{2n}(x)=\pm\C_{n}(-x)$. 
	\end{prop}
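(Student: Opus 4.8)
The plan is to read off both statements from the factorization $T_{n}(x)-1=\prod_{d\mid n}\C_{d}^{\sigma_{d}}(x)$ of Lemma~\ref{inrtocyc}: the first by evaluating at $x=0$ together with an induction on $n$, the second by comparing linear factors directly.

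For $\C_{n}(0)=\pm 1$ I would induct on odd $n$. The base case is $\C_{1}(0)=0-1=-1$. For odd $n\ge 3$ assume $\C_{d}(0)=\pm1$ for all odd $d<n$. Since $n$ is odd, $T_{n}$ is an odd polynomial (immediate from the recursion), so $T_{n}(0)=0$; every divisor $d$ of $n$ is odd, so $\sigma_{1}=1$ and $\sigma_{d}=2$ for $d\ge3$. Evaluating Lemma~\ref{inrtocyc} at $x=0$ then gives
\[
-1=\C_{1}(0)\,\C_{n}(0)^{2}\!\!\prod_{\substack{d\mid n\\ 3\le d<n}}\!\!\C_{d}(0)^{2}
 = -\,\C_{n}(0)^{2}\!\!\prod_{\substack{d\mid n\\ 3\le d<n}}\!\!\C_{d}(0)^{2},
\]
and by the inductive hypothesis the remaining product is $1$, so $\C_{n}(0)^{2}=1$. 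As $\C_{n}(0)$ is real, $\C_{n}(0)=\pm1$.

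For $\C_{2n}(x)=\pm\C_{n}(-x)$ with $n\ge3$ odd I would compare the factorizations of $\C_{n}$ and $\C_{2n}$ coming from the definition in Lemma~\ref{inrtocyc}. In a factor $2\bigl((-x)-\cos\tfrac{2j\pi}{n}\bigr)$ of $\C_{n}(-x)$ I pull out a minus sign and use $\cos(\pi-\theta)=-\cos\theta$ to rewrite it as $-2\bigl(x-\cos\tfrac{(n-2j)\pi}{n}\bigr)$, whence
\[
\C_{n}(-x)=(-1)^{\varphi(n)/2}\!\!\prod_{\substack{1\le j\le(n-1)/2\\ \gcd(j,n)=1}}\!\!2\Bigl(x-\cos\tfrac{(n-2j)\pi}{n}\Bigr),
\]
the exponent $\varphi(n)/2$ being the number of factors of $\C_{n}$. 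Because $n$ is odd, $k=n-2j$ runs bijectively over the odd integers $k$ with $1\le k\le n-1$ and $\gcd(k,n)=1$ as $j$ runs over its index set: injectivity is clear, $\gcd(n-2j,n)=\gcd(j,n)$ since $n$ is odd, and both sets have $\varphi(n)/2$ elements, as in $[1,n-1]$ the map $k\mapsto n-k$ pairs units of opposite parity. That set of $k$ is precisely $\{k:1\le k\le n,\ \gcd(k,2n)=1\}$, and $\cos\tfrac{k\pi}{n}=\cos\tfrac{2k\pi}{2n}$, so the product above is exactly $\C_{2n}(x)$; hence $\C_{n}(-x)=(-1)^{\varphi(n)/2}\C_{2n}(x)$.

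A less computational route to the second statement, also based on Lemma~\ref{inrtocyc}, uses $T_{2n}(x)-1=2\bigl(T_{n}(x)-1\bigr)\bigl(T_{n}(x)+1\bigr)$ coming from $T_{2n}=2T_{n}^{2}-1$: since the divisors of $2n$ are the $d\mid n$ and the $2d$ with $d\mid n$, Lemma~\ref{inrtocyc} yields $\prod_{d\mid n}\C_{2d}^{\sigma_{2d}}(x)=2\bigl(T_{n}(x)+1\bigr)$, and replacing $T_{n}(x)+1$ by $-\bigl(T_{n}(-x)-1\bigr)$ via $T_{n}(-x)=-T_{n}(x)$ one reduces, after peeling off the equalities $\C_{2d}=\pm\C_{d}(-x)$ for $d<n$ inductively, to $\C_{2n}(x)^{2}=\C_{n}(-x)^{2}$, hence to $\C_{2n}(x)=\pm\C_{n}(-x)$. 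Either way, the only place that needs real care is the bookkeeping with the index sets and with the exponents $\sigma_{d}$ (in particular that $\sigma_{2}=1$); I expect that, rather than any genuine difficulty, to be the main thing to get right.
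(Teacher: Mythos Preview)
Your argument is correct and, in its primary route, is essentially the paper's proof. For $\C_{n}(0)=\pm1$ the paper does exactly your induction via evaluating the factorization of $T_{n}(x)-1$ at $x=0$. For $\C_{2n}(x)=\pm\C_{n}(-x)$ the paper also compares linear factors: it notes that $\deg\C_{2n}=\varphi(2n)/2=\varphi(n)/2=\deg\C_{n}$, that the leading coefficients agree, and that the zero sets coincide via $-\cos\tfrac{2k\pi}{n}=\cos\tfrac{2(n-2k)\pi}{2n}$ with the same index-set bijection $l=n-2k$ you use. Your write-up is a bit more explicit (you track the sign $(-1)^{\varphi(n)/2}$ and spell out the bijection), but the content is the same.

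Your second route, via $T_{2n}(x)-1=2(T_{n}(x)-1)(T_{n}(x)+1)$ and $T_{n}(-x)=-T_{n}(x)$, is a genuine alternative not in the paper. It trades the trigonometric bookkeeping for a short induction and the algebraic step $\C_{2n}^{2}=\C_{n}(-x)^{2}\Rightarrow\C_{2n}=\pm\C_{n}(-x)$; as you note, the only delicate point is the cancellation of the factor $2$ against $\C_{2}(x)=2(x+1)$ and $-2\,\C_{1}(-x)=2(x+1)$, which you flagged correctly.
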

\begin{proof}
	The proof of the first statement is by induction on $n$. We have $\C_{1}(x)=x-1$, so $\C_{1}(0)=-1$. Suppose that for every odd natural number $d$ such that $1\le d<n$ we have $\C_{d}(0)=\pm 1$. 
	Since $T_{n}(0)=0$ for odd $n$, we have
	\[-1=T_{n}(0)-1=\prod_{d \mid n}\C_{d}^{\sigma_{d}}(0)=\C_{1}(0)\cdot\prod_{d \mid n,\, 1<d<n}\C_{d}^{2}(0)\cdot\C_{n}^{2}(0)= - \C_{n}^{2}(0).\]
	Thus we get $\C_{n}^{2}(0)=1$.
	
	Now suppose $n\ge 3$. We have $\deg \C_{2n}={\varphi(2n)}/{2}={\varphi(n)}/{2}=\deg \C_{n}$. The same  shows that $\C_{2n}$ and $\C_{n}$ have the same leading coefficient, namely $2^{{\varphi(n)}/{2}}$. It remains to examine the zeros. We have
	$-\cos\tfrac{2 k \pi}{n}=\cos\tfrac{2 (n-2k) \pi}{2n}$. Denote $l=n-2k$. The conditions $1\le k \le \frac{n}{2}, \,
	\gcd(k,n)=1$ are equivalent to $1\le l \le \frac{2n}{2}, \,
	\gcd(l,2n)=1$. Hence $\C_{2n}(x)$ and $\C_{n}(-x)$ have the same set of zeros.
\end{proof}

\begin{prop}\label{poly square int}
	Let $\mathbb{K}$ be a field of characteristic $0$. Suppose that $P\in \mathbb{K}[x]$, $P(0)=\pm 1$, and $P^{2}\in \Z[x]$. Then $P\in \Z[x]$.
\end{prop}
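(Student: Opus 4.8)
The plan is to argue in two stages: first that $P(x)\in\Q[x]$, and then that $P(x)\in\Z[x]$. Set $Q(x):=P^{2}(x)\in\Z[x]$; since $P(0)=1\neq 0$ we have $Q\neq 0$, and we write $a_{0}\in\Q^{\times}$ for its leading coefficient.

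\emph{Stage 1: $P\in\Q[x]$.} Here I would compare the factorizations of $Q$ over $\Q$ and over $\mathbb{K}$. Factor $Q=a_{0}\prod_{i}f_{i}^{e_{i}}$ with the $f_{i}$ distinct monic irreducible polynomials in $\Q[x]$. Because $\mathbb{K}$ has characteristic $0$, each $f_{i}$ is separable, hence remains squarefree over $\mathbb{K}$, and distinct $f_{i}$ stay coprime over $\mathbb{K}$; so the prime factorization of $Q$ in the UFD $\mathbb{K}[x]$ is obtained by splitting each $f_{i}$ into pairwise distinct monic irreducibles, with no coincidences between factors coming from different $f_{i}$. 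Since $Q=P^{2}$, every irreducible factor of $Q$ in $\mathbb{K}[x]$ occurs to an even power, so each $e_{i}$ is even, say $e_{i}=2d_{i}$; and as $P\mid Q$, the polynomial $P$ equals $\beta\prod_{i}f_{i}^{d_{i}}$, where $\beta\in\mathbb{K}^{\times}$ is the leading coefficient of $P$ (and $\beta^{2}=a_{0}$). Put $R:=\prod_{i}f_{i}^{d_{i}}\in\Q[x]$, a monic polynomial, so $P=\beta R$. Evaluating at $0$ gives $1=P(0)=\beta\,R(0)$ with $R(0)\in\Q$, which forces $R(0)\neq 0$ and hence $\beta=R(0)^{-1}\in\Q$; therefore $P=\beta R\in\Q[x]$. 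This is the step where the hypothesis $P(0)=1$ is essential, and I expect it to be the main obstacle: over a general characteristic-$0$ field the scalar $\beta$ need not be rational --- for instance $P=\sqrt{2}\,(x+1)$ satisfies $P^{2}=2(x+1)^{2}\in\Z[x]$ but $P\notin\Q[x]$ --- and it is precisely the prescribed nonzero rational value $P(0)=1$ that pins $\beta$ down.

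\emph{Stage 2: $P\in\Z[x]$.} Now that $P\in\Q[x]$, fix a prime $\ell$ and extend the $\ell$-adic valuation to $\Q[x]\setminus\{0\}$ by $v_{\ell}\bigl(\sum_{i}c_{i}x^{i}\bigr)=\min_{i}v_{\ell}(c_{i})$. By Gauss's lemma $v_{\ell}$ is additive on products, so $2\,v_{\ell}(P)=v_{\ell}(P^{2})=v_{\ell}(Q)\geq 0$ because $Q\in\Z[x]$; hence $v_{\ell}(P)\geq 0$. As this holds for every prime $\ell$, all coefficients of $P$ lie in $\Z$, i.e.\ $P\in\Z[x]$. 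This stage is routine, the whole weight of the proposition resting on Stage 1.
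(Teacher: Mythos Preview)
Your proof is correct. The paper takes a genuinely different, more hands-on route: writing $P(x)=\sum_i c_i x^i$ with $c_0=1$, the coefficient of $x^k$ in $P^2$ is $2c_k+\sum_{0<i<k}c_ic_{k-i}$, and an induction on $k$ shows simultaneously that each $c_k$ is rational with denominator a power of $2$. One then writes $P=1+2^{-e}Q$ with $Q\in\Z[x]$ having some odd coefficient $d_j$; a parity check (the coefficient of $x^{2j}$ in $Q^2$ is $d_j^2+2\sum_{0<i<j}d_{j+i}d_{j-i}$, hence odd) shows that $2^{e+1}Q+Q^2$ has an odd coefficient, and since $P^2=1+(2^{e+1}Q+Q^2)/2^{2e}\in\Z[x]$ this forces $e=0$.

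So where you invoke unique factorization in $\mathbb{K}[x]$ (plus separability to control how rational irreducibles split) to reach $\Q[x]$, and then Gauss's lemma to descend to $\Z[x]$, the paper never leaves the level of coefficient recursions. The paper's argument is entirely self-contained and elementary, at the cost of an ad hoc $2$-adic trick; yours is more conceptual, makes clear exactly where the hypothesis $P(0)=1$ enters (pinning down the scalar $\beta$), and visibly generalizes---for instance to any prescribed $P(0)\in\Q^\times$, or with $\Z$ replaced by another UFD satisfying Gauss's lemma.
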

\begin{proof}
	Assume without loss of generality that $P(0)= 1$.  Put $P(x)=\sum_{i=0}^{\infty}c_{i}x^{i}$. Since for every $k\ge 1$ the coefficient of $x^{k}$ in $P^{2}(x)$ equals $2c_{k}+\sum_{0<i<k}c_{i}c_{k-i}$, we get by induction on $k$ that each $c_{k}$ is a rational number with denominator equal to $1$ or to a power of $2$. Suppose $P\notin \Z[x]$. Thus $P(x)=1+\tfrac{Q(x)}{2^{e}}$, where $Q(x)=\sum_{i=1}^{\infty}d_{i}x^{i}\in \Z[x]$ with some $d_{j}$ being odd, and $e$ is a positive integer. The coefficient of $x^{2j}$ in $Q^{2}(x)$ equals $d_{j}^{2} + 2\sum_{0<i<j}d_{j+i}d_{j-i}$, so it is an odd integer. Thus the coefficient of $x^{2j}$ in $2^{e+1}Q(x)+Q^{2}(x)$ is also odd. But we have $P^{2}(x)=1+\tfrac{2^{e+1}Q(x)+Q^{2}(x)}{2^{2e}}$, so $e=0$, a contradiction.  
\end{proof}

\begin{lem}\label{Omega integral} $\C_{n}\in \Z[x]$. 
\end{lem}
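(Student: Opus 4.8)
The plan is to prove $\C_n\in\Z[x]$ by strong induction on $n$, using three facts established above: the factorization $T_n(x)-1=\prod_{d\mid n}\C_d^{\sigma_d}(x)$ of Lemma~\ref{inrtocyc}, the composition and reflection identities of Propositions~\ref{omegas prop b} and \ref{omega odd}, and the criterion of Proposition~\ref{poly square int}; I also use freely that $T_m\in\Z[x]$, which is clear from the recursion. The base cases $n=1,2$ are immediate from Lemma~\ref{inrtocyc}, since $\C_1(x)=x-1$ and $\C_2(x)=(T_2(x)-1)/\C_1(x)=2x+2$.

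In the inductive step I would first dispose of the even indices by reducing them to smaller ones. If $n=2^{a}$ with $a\ge2$, then $\C_4(x)=2x$ is read off Lemma~\ref{inrtocyc}, and for $a\ge3$ Proposition~\ref{omegas prop b} (with multiplier $2^{a-2}$) gives $\C_{2^{a}}(x)=\C_4(T_{2^{a-2}}(x))=2T_{2^{a-2}}(x)\in\Z[x]$. If $n=2b$ with $b$ odd and $b\ge3$, then $\C_n(x)=\pm\C_b(-x)$ by Proposition~\ref{omega odd}, hence $\C_n\in\Z[x]$ by the induction hypothesis. In the remaining even case $4\mid n$ and $n$ has an odd prime factor, so its radical $n_0$ satisfies $3\le n_0<n$ and every prime divisor of $n/n_0$ divides $n_0$; then Proposition~\ref{omegas prop b} and the induction hypothesis give $\C_n(x)=\C_{n_0}(T_{n/n_0}(x))\in\Z[x]$. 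Everything therefore reduces to $n$ odd, $n\ge3$.

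This odd case is the heart of the matter. Rearranging Lemma~\ref{inrtocyc} we may write $T_n(x)-1=D(x)\,\C_n^{2}(x)$, where $D(x)=(x-1)\prod_{d\mid n,\,1<d<n}\C_d^{2}(x)$. By the induction hypothesis every factor of $D$ lies in $\Z[x]$, and each such $d>1$ is an odd integer $\ge3$, so $\C_d(0)=\pm1$ by Proposition~\ref{omega odd}; hence each $\C_d$, and also $x-1$, is a primitive polynomial, and therefore so is $D$ by Gauss's lemma. Now $\C_n^{2}(x)=(T_n(x)-1)/D(x)$ is a polynomial lying in $\Q[x]$ (divide in $\Q[x]$ and use uniqueness of polynomial division), so Gauss's lemma applied to $T_n(x)-1=D(x)\C_n^{2}(x)$ yields $\C_n^{2}\in\Z[x]$. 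Finally $\C_n(0)=\pm1$ by Proposition~\ref{omega odd}, so with $\varepsilon=\C_n(0)$ the real polynomial $P=\varepsilon\C_n$ satisfies $P(0)=1$ and $P^{2}=\C_n^{2}\in\Z[x]$; Proposition~\ref{poly square int} (with $\mathbb{K}=\mathbb{R}$) then forces $P\in\Z[x]$, i.e.\ $\C_n\in\Z[x]$.

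The point at which care is needed --- and the reason the odd case cannot be settled by simply dividing in the factorization identity --- is that $\C_n$ is a priori only a real polynomial whose leading coefficient $2^{\varphi(n)/2}$ is not a unit, so the factorization could conceivably introduce spurious powers of $2$ in denominators (and it does pick up a content of $2$ at even indices, e.g.\ $\C_4=2x$ and $\C_8=4x^{2}-2$). The two features that rule this out for odd $n$ are the non-vanishing constant term $\C_d(0)=\pm1$, which makes the intermediate factors primitive so that Gauss's lemma becomes available, and Proposition~\ref{poly square int}, which upgrades integrality of the square $\C_n^{2}$ to integrality of $\C_n$ itself. Propositions~\ref{omegas prop b} and \ref{omega odd} then carry the result from odd indices to all of $n$.
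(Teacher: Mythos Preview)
Your proof is correct and follows essentially the same strategy as the paper's: handle odd $n$ first by induction, showing $\C_n^{2}\in\Z[x]$ and then applying Proposition~\ref{poly square int}, and afterwards transport the result to even $n$ via Propositions~\ref{omega odd} and~\ref{omegas prop b} together with the explicit values of $\C_2$ and $\C_4$. The only cosmetic difference is that where you invoke Gauss's lemma (using that $D$ is primitive since each factor has constant term $\pm1$), the paper does the equivalent step by hand, solving for the coefficients of $\C_n^{2}$ one at a time from the relation $T_n(x)-1=\C_n^{2}(x)\,g(x)$ using $g(0)=\pm1$.
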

\begin{proof}
	First we prove the lemma for odd $n$. We use induction. $\C_{1}\in \Z[x]$. Let $n>1$. Suppose that for every odd natural number $d$ such that $1\le d<n$ we have $\C_{d}\in \Z[x]$. We have $T_{n}(x)-1=\prod_{d \mid n}\C_{d}^{\sigma_{d}}(x)=\C_{n}^{2}(x)g(x)$, where $g\in \Z[x]$. Put $\C_{n}^{2}(x)=\sum a_{i}x^{i}$ and $g(x)=\sum b_{i}x^{i}$. We have $a_{0}=1$ by Proposition \ref{omega odd}, and furthermore we get $b_{0}=-1$  since $T_{n}(0)=0$ for odd $n$. Fix  $i\le n$ and assume that $a_{j}\in \Z$ for every $j<i$. Since $a_{i}b_{0}+a_{i-1}b_{1}+\ldots +a_{0}b_{i}\in \Z$ as the coefficient of $x^{i}$ in $T_{n}(x)-1$, we have $a_{i}\in \Z$. Thus $\C_{n}^{2}\in \Z[x]$, and by Propositions \ref{omega odd} and \ref{poly square int} we get $\C_{n}\in \Z[x]$. Hence the lemma is proved for odd n.    
	
	If $n$ is the product of $2$ and an odd natural number greater than or equal to $3$, we use Proposition \ref{omega odd}.
	
	We directly compute $\C_{2}(x)=2(x+1)$ and $\C_{4}(x)=2x$.
	
	For $n\ge 8$ having the form $n=2^{\alpha}l$ where $\alpha \ge 2$ and $l$ is odd, we use Proposition \ref{omegas prop b} with 
	$\C_{n}=\C_{2l}(T_{2^{\alpha-1}}(x))$.	
\end{proof}

\begin{cor}\label{if T then Omega} Let $x$ be an integer.
	Every primitive prime divisor of $T_{n}(x)-1$ divides $\C_{n}(x)$.
\end{cor}

\begin{proof}
	Immediate from Lemmas \ref{inrtocyc} and  \ref{Omega integral}.
\end{proof}

\begin{prop}\label{T on x plus one minus one by x}
	For every natural number $n$ and every nonzero real number $x$ 
	\[\frac{T_{n}(x+1)-1}{x}=n^2+\frac{n^2(n^2-1)}{6}x+\frac{n^2(n^2-1)(n^2-4)}{90}x^{2}+\ldots \in \Z[x],\]
	where the dots denote irrelevant terms. 
\end{prop}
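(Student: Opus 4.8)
The plan is to read off the coefficients from the Taylor expansion of $T_n$ at the point $1$, using the classical differential equation satisfied by the Chebyshev polynomials. Since $T_n(1)=1$, the polynomial $T_n(x+1)-1$ is divisible by $x$, so $\frac{T_n(x+1)-1}{x}$ is in fact a polynomial in $x$ (of degree $n-1$), and the asserted identity is an identity of polynomials; its coefficient of $x^{k}$ is $\frac{T_n^{(k+1)}(1)}{(k+1)!}$. Thus it suffices to compute $T_n'(1)$, $T_n''(1)$ and $T_n'''(1)$.

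First I would recall that $y=T_n(x)$ solves the Chebyshev differential equation $(1-x^2)y''-xy'+n^2y=0$ (see \cite{Riv}). Differentiating this relation and collecting terms, one checks by induction on $k$ that
\[(1-x^2)T_n^{(k+2)}(x)-(2k+1)\,x\,T_n^{(k+1)}(x)+(n^2-k^2)\,T_n^{(k)}(x)=0\]
for every $k\ge 0$. The induction step is a single differentiation of the displayed identity: the coefficient $-(2k+1)x$ picks up an extra $-2x$ and becomes $-(2k+3)x$, while the coefficient $n^2-k^2$ picks up the extra term $-(2k+1)$ and becomes $n^2-k^2-(2k+1)=n^2-(k+1)^2$, which is exactly the $(k+1)$-instance.

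Evaluating the boxed identity at $x=1$ annihilates the first term and yields the recursion $T_n^{(k+1)}(1)=\frac{n^2-k^2}{2k+1}\,T_n^{(k)}(1)$. Starting from $T_n(1)=1$ this gives $T_n'(1)=n^2$, then $T_n''(1)=\frac{n^2(n^2-1)}{3}$ and $T_n'''(1)=\frac{n^2(n^2-1)(n^2-4)}{15}$. Substituting these into $\frac{T_n(x+1)-1}{x}=T_n'(1)+\frac{T_n''(1)}{2}x+\frac{T_n'''(1)}{6}x^2+\cdots$ produces precisely the three coefficients $n^2$, $\frac{n^2(n^2-1)}{6}$, $\frac{n^2(n^2-1)(n^2-4)}{90}$ claimed in the statement.

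There is essentially no serious obstacle here: the only point requiring care is the bookkeeping in the inductive differentiation of the Chebyshev equation, which is routine. (If one prefers to avoid the differential equation, the substitution $x+1=\cosh t$, $u=t/2$ gives $\frac{T_n(x+1)-1}{x}=\bigl(\tfrac{\sinh nu}{\sinh u}\bigr)^{2}$ with $x=2\sinh^2 u$; expanding both sides as power series in $u^2$ and then inverting $x=2\sinh^2 u$ to write $u^2$ in terms of $x$ yields the same coefficients, at the cost of somewhat heavier computation.)
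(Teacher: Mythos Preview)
Your argument is correct. The recursion $T_n^{(k+1)}(1)=\dfrac{n^2-k^2}{2k+1}\,T_n^{(k)}(1)$ coming from repeated differentiation of the Chebyshev ODE is verified exactly as you describe, and the resulting values $T_n'(1)=n^2$, $T_n''(1)=\tfrac{n^2(n^2-1)}{3}$, $T_n'''(1)=\tfrac{n^2(n^2-1)(n^2-4)}{15}$ give precisely the three stated coefficients after dividing by $1!,2!,3!$.

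This is a genuinely different route from the paper's, which simply says the expansion of $T_n(x+1)$ ``can be proved by induction on $n$'' (via the three-term recurrence $T_{n+2}=2xT_{n+1}-T_n$). The inductive proof is more elementary---it needs nothing beyond the defining recurrence---but each coefficient must be pushed through the induction separately. Your ODE approach is slightly less elementary but more structural: the recursion on $k$ immediately yields the closed form $T_n^{(k)}(1)=\dfrac{\prod_{j=0}^{k-1}(n^2-j^2)}{(2k-1)!!}$, hence the $x^k$-coefficient $\dfrac{2^k\prod_{j=0}^{k-1}(n^2-j^2)}{(2k)!}$, which in fact proves the Remark following the Proposition as a free byproduct rather than just the first three terms.
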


\begin{proof}
	The formula 
	\[T_{n}(x+1)=1+n^2 x+\frac{n^2(n^2-1)}{6}x^{2}+\frac{n^2(n^2-1)(n^2-4)}{90}x^{3}+\ldots\]
	can be proved by induction on $n$. 
\end{proof}

\begin{rem} One can observe that
	\[T_{n}(x+1)=1+\sum_{k=1}^{n}\frac{2^{k}\prod_{i=0}^{k-1}(n^{2}-i^{2})}{(2k)!}x^{k}.\]
\end{rem}	

\begin{thm}\label{Satz 1}
Let $n$ and $a$  be integers such that $n>1$ and $\left| a\right| >1$. If $p$ is a prime number dividing $\Cn$ then there exists a nonnegative integer $i$ such that $n=\Che(a)p^{i}$. If $i>0$, then $p$ is the greatest prime divisor of $n$. If moreover $p^2 \mid \Cn$ then either $p=2$ and $n\in\left\lbrace 2,4\right\rbrace$, or $p=3$ and $n\in\left\lbrace 3,6\right\rbrace $.   
\end{thm}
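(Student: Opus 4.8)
The plan is to isolate a Chebyshev analogue of the lifting\-/the\-/exponent lemma and then track $p$-adic valuations; throughout I write $v$ for the $p$-adic valuation and $f=\Che(a)$. The first step is pure bookkeeping. By Lemma~\ref{Omega integral} each $\Omega_{d}(a)\in\Z$, and since $\Omega_{d}(x)\mid T_{d}(x)-1$ in $\Z[x]$ (Lemma~\ref{inrtocyc}), a prime dividing $\Omega_{d}(a)$ divides $T_{d}(a)-1$, so by Lemma~\ref{ChebOrd} one has $v(\Omega_{d}(a))=0$ unless $f\mid d$. Combining this with Lemma~\ref{inrtocyc} gives, for every positive integer $m$,
\[v(T_{m}(a)-1)=\sum_{f\mid d\mid m}\sigma_{d}\,v(\Omega_{d}(a)).\]
In particular $p\mid T_{n}(a)-1$ yields $f\mid n$; and for a prime $q\mid n$ with $f\mid n/q$, the difference $v(T_{n}(a)-1)-v(T_{n/q}(a)-1)$ is a sum of non\-/negative terms $\sigma_{d}v(\Omega_{d}(a))$ over the $d\mid n$ with $d\nmid n/q$, hence is at least $\sigma_{n}v(\Omega_{n}(a))$, since $d=n$ is one such term.

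Next I would establish the key tool. For a prime $q$, Proposition~\ref{T on x plus one minus one by x} lets us write $T_{q}(y+1)-1=y\,R_{q}(y)$ with $R_{q}\in\Z[x]$ and $R_{q}(0)=q^{2}$; the composition identity then gives $T_{qd}(a)-1=(T_{d}(a)-1)\,R_{q}(T_{d}(a)-1)$, and if $f\mid d$ (so $p\mid T_{d}(a)-1$) then $R_{q}(T_{d}(a)-1)\equiv q^{2}\pmod p$. Thus for $q\neq p$ we get $v(T_{qd}(a)-1)=v(T_{d}(a)-1)$. Consequently, if some prime $q\neq p$ divided $n/f$, then $f\mid n/q$, and this identity would force $v(T_{n}(a)-1)=v(T_{n/q}(a)-1)$, contradicting the bound above (as $v(\Omega_{n}(a))\ge1$). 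Hence $n=fp^{i}$ for some $i\ge0$. If $i>0$, then $p\mid n$ and every other prime divisor of $n$ divides $f$; Lemma~\ref{prop Che} gives either $p=2$ and $f\in\{1,2\}$, so $n$ is a power of $2$, or $p$ odd with $f\mid p-1$ or $f\mid p+1$, in which case --- using that $p+1$ is even, hence not prime for odd $p$, and that $p\nmid p\pm1$ --- every prime divisor of $f$ is $<p$. Either way $p$ is the largest prime divisor of $n$.

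For the last assertion, assume $i>0$ and $p^{2}\mid\Omega_{n}(a)$. Since $f$ is prime to $p$ for odd $p$, while $n$ is a power of $p$ when $p=2$, the only $d\mid n$ with $f\mid d$ and $v(d)=v(n)$ is $d=n$; hence the difference above collapses to the exact equality $\sigma_{n}v(\Omega_{n}(a))=v(R_{p}(c))$, where $c:=T_{n/p}(a)-1$ and $v(c)\ge1$. For odd $p$ a short computation gives $v(r_{k})=2-\lfloor2k/p\rfloor$ for the coefficients $r_{k}=2^{k}\prod_{i=0}^{k-1}(p^{2}-i^{2})/(2k)!$ of $R_{p}$, so when $p\ge5$ the constant term $r_{1}=p^{2}$ strictly dominates every monomial $r_{k}c^{k-1}$; thus $v(R_{p}(c))=2$ and $v(\Omega_{n}(a))=1$, which is impossible. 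So $p\in\{2,3\}$, where one computes $R_{2}(y)=2(y+2)$ and $R_{3}(y)=(2y+3)^{2}$, i.e.\ $T_{2m}(a)-1=(T_{m}(a)-1)\cdot2(T_{m}(a)+1)$ and $T_{3m}(a)-1=(T_{m}(a)-1)(2T_{m}(a)+1)^{2}$. If $i\ge2$, then $pf\mid n/p$, so $T_{pf}(a)-1\mid c$ and one checks $v(T_{pf}(a)-1)\ge2$, hence $v(c)\ge2$; then the $c$-dependent part of $R_{2}(c)=2c+4$ (resp.\ of $R_{3}(c)=(2c+3)^{2}$) has strictly larger valuation than the constant, again forcing $v(R_{p}(c))=2$ and $v(\Omega_{n}(a))=1$, a contradiction. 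Hence $i=1$ and $n=pf$. For $p=2$, Lemma~\ref{prop Che} already gives $f\in\{1,2\}$, so $n=2f\in\{2,4\}$. For $p=3$, \emph{a priori} $f\in\{1,2,4\}$, but $2v(\Omega_{n}(a))=v(R_{3}(T_{f}(a)-1))=2v(2T_{f}(a)+1)$, so $p^{2}\mid\Omega_{n}(a)$ forces $v(T_{f}(a)-1)=1$; since $v(T_{f}(a)-1)=\sigma_{f}v(\Omega_{f}(a))$ with $v(\Omega_{f}(a))\ge1$, this gives $\sigma_{f}=1$, hence $f\le2$, excluding $f=4$ and leaving $n=3f\in\{3,6\}$. (The degenerate case $n=2$, where $\sigma_{n}=1$ so the collapse needs a tiny adjustment, is checked directly from $\Omega_{2}(a)=2(a+1)$.)

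The step I expect to be the main obstacle is the valuation analysis for $p\in\{2,3\}$: one cannot simply bound $v(R_{p}(c))$, because if $v(c)=1$ several monomials of $R_{p}$ attain the minimal $p$-adic value and could in principle conspire to raise it. What makes the argument go through is the pair of factorizations $T_{2}(y)-1=2(y-1)(y+1)$ and $T_{3}(y)-1=(y-1)(2y+1)^{2}$ together with the monotonicity $m_{1}\mid m_{2}\Rightarrow(T_{m_{1}}(a)-1)\mid(T_{m_{2}}(a)-1)$ (itself a consequence of the composition identity and $(y-1)\mid T_{k}(y)-1$); these pin $i$ down to $1$, after which everything reduces to the elementary constraint $\sigma_{f}=1$.
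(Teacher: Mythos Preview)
Your argument is correct, and through the reduction to $p\in\{2,3\}$ it coincides with the paper's proof: both show $n=fp^{i}$ by comparing $\tfrac{T_{n}(a)-1}{T_{r}(a)-1}\equiv w^{2}\pmod p$ against the divisibility $\Omega_{n}(a)\mid\tfrac{T_{n}(a)-1}{T_{r}(a)-1}$, both deduce that $p$ is the largest prime factor of $n$ from Lemma~\ref{prop Che}, and both dispose of $p\ge 5$ via the congruence $\tfrac{T_{n}(a)-1}{T_{s}(a)-1}\equiv p^{2}\pmod{p^{3}}$ coming from Proposition~\ref{T on x plus one minus one by x}. Your packaging of these steps as exact $p$-adic valuation identities is a cosmetic (and pleasant) reformulation rather than a new idea.

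Where you genuinely diverge is in the endgame for $p\in\{2,3\}$. The paper abandons the valuation viewpoint here and argues case by case: for $p=2$ it invokes the closed form $\Omega_{2^{i}}(x)=2\,T_{2^{i-2}}(x)$ and checks $2T_{2^{i-2}}(a)\equiv 2\pmod 4$ for $i>2$; for $p=3$ it uses Proposition~\ref{omegas prop b} to reduce to $\Omega_{3}$ or $\Omega_{6}$ evaluated at some $T_{m}(a)$, and then tabulates the images of $T_{2}$ and $T_{3}$ on $\Z/9\Z$ to see that $4$ (resp.\ $5$) is never hit unless $n=3$ (resp.\ $n=6$). You instead keep the same machinery running: the factorizations $R_{2}(y)=2(y+2)$ and $R_{3}(y)=(2y+3)^{2}$ let you push the valuation equality one more step, first forcing $i=1$ by showing $v(c)\ge 2$ when $i\ge 2$, and then, for $p=3$, eliminating $f=4$ via the neat observation that $9\mid 2T_{f}(a)+1$ forces $v(T_{f}(a)-1)=1$, which is incompatible with $\sigma_{4}=2$. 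Your route is more uniform and avoids both the explicit formula for $\Omega_{2^{i}}$ and the brute-force mod-$9$ tables; the paper's route is shorter to write down once those auxiliary facts are in hand. Both are valid and neither is strictly more elementary than the other.
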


\begin{proof} $\Cn$ is a divisor of $T_{n}(a)-1$, so  $T_{n}(a)-1 \equiv 0 \mod p$. Hence $\Che(a)\mid n$ by Lemma \ref{ChebOrd}, and we can write $n=\Che(a)p^{i}w$, where $w$ is a natural number not divisible by $p$. Denote $r=\Che(a)p^{i}$. Since $\Che(a)\mid r$, we get by Lemma \ref{ChebOrd} that $T_{r}(a)-1 \equiv  0\mod p$. 
Compute
\[\frac{T_{n}(a)-1}{T_{r}(a)-1}= \frac{T_{w}((T_{r}(a)-1)+1)-1}{T_{r}(a)-1}\equiv w^2 \mod p,\]
where the congruence is obtained by putting $n=w$ and $x=T_{r}(a)-1$ in Proposition \ref{T on x plus one minus one by x}.
Suppose $w>1$. This implies $r<n$. Hence $\Cn$ is a divisor of $\tfrac{T_{n}(a)-1}{T_{r}(a)-1}$ by Lemma \ref{inrtocyc}. But $p \mid \Cn$, so we get $p\mid w^2$, contrary to the definition of $w$. Thus $w=1$ and $n=\Che(a)p^{i}$. \\

Suppose $i>0$. \\

Lemma \ref{prop Che} asserts that $\Che(a)$ divides one of the numbers $p-1 ,p , p+1$, and that $\CheOrd_{2}(a)\le 2$. Thus $p$ is the greatest prime divisor of $n$.\\

Define $s=\Che(a)p^{i-1}$. By Lemma \ref{ChebOrd} we have $T_{s}(a)-1 \equiv  0\mod p$. Assume $p\ge 5$. Compute 
\[ \frac{T_{n}(a)-1}{T_{s}(a)-1}= \frac{T_{p}((T_{s}(a)-1)+1)-1}{T_{s}(a)-1}\equiv p^2 \mod p^3, \]
where the congruence is obtained by putting $n=p$ and $x=T_{s}(a)-1$ in Proposition \ref{T on x plus one minus one by x}. 
Since $s\mid n$ and $s<n$, we get by Lemma \ref{inrtocyc} that $\Cn^{\sigma_{n}}$ is a divisor of $\tfrac{T_{n}(a)-1}{T_{s}(a)-1}$. Assume $p^2 \mid \Cn$. If $n> 2$ then $\sigma_{n}=2$ and we get  a contradiction with the above computation. Thus  we have $p\in \left\lbrace 2,3\right\rbrace $ or $n=2$. The latter returns us to the case $p=2$.

Consider first the case when $p=2$. Since $p$ is the greatest prime divisor of $n$, we have $n=2^{i}$. So $4 \mid \C_{2^{i}}(a)$. 
Since $\C_{4}(x)=2x$, we get by Proposition \ref{omegas prop b} that  $\C_{2^{i}}(a)=2 T_{2^{i-2}}(a)$ for $i>1$. 
Thus for $i>1$ we get $2 \mid T_{2^{i-2}}(a)$. But $T_{2}(x)=2x^{2}-1$, so using the composition identity we have $T_{2^{i-2}}(a)\equiv 1 \mod 2$ for $i\ge 3$. So $i\in \left\lbrace 1,2 \right\rbrace $. Thus $n \in \left\lbrace 2,4\right\rbrace $.

%But  $2T_{2^{i-2}}(a)\equiv 2 \mod 4$ for $i>2$ (for $i=3$ we have $2T_{2}(x)=4x^{2}-2$, and for higher $i$ use the composition identity). Thus $i=1$ or $i=2$. \\
%If $i=1$ then $n=2$. If $i=2$ then $4 \mid \C_{4}(a)=a$, so we have $n=4$ and $a = 0 \mod 4$.  

Now let $p=3$. Since $p$ is the greatest prime divisor of $n$, we have $n=2^{j}3^{i}$ with $i\ge1$. 

Consider first the case when $j=0$. The only zero in $\Z/9\Z$ of the polynomial $\C_{3}(x)=2x+1$ is $x=4$. The image of  $T_{3}$ on   $\Z/9\Z$ equals  $\left\lbrace 0, 1, 8\right\rbrace $. So by Proposition \ref{omegas prop b} and the composition identity we get that $9 \mid \Cn$ implies $n=3$.

Now consider the case when $j\ge1$. The only zero in $\Z/9\Z$ of the polynomial $\C_{6}(x)=2x-1$ is $x=5$. The image of $T_{2}$  on   $\Z/9\Z$ equals $\left\lbrace1, 4, 7, 8 \right\rbrace $, and the image of $T_{3}$ equals $\left\lbrace 0, 1, 8\right\rbrace $. So by Proposition \ref{omegas prop b} and the composition identity we get that $9 \mid \Cn$ implies $n=6$.

Thus  $n\in \left\lbrace 3,6\right\rbrace $. 
%Now let $n=2$. We get that $\Che(a)=1$. Thus $p^2 \mid \C_{2}(a)=2(a+1)$ and $p \mid T_{1}(a)-1=a-1$. So $p=2$.
\end{proof}

\begin{cor}\label{wniosek}
Let $n$ and $a$  be integers such that $n>1$ and $\left| a\right| >1$. The following statements are equivalent.
\begin{enumerate}[label={\text{S}\arabic*}]
	\item \label{TFAE1} A prime number $p$ such that $n=\Che(a)$ does not exist.
	\item \label{TFAE2} $\left| \Cn \right|$  is a power of a prime number dividing $n$.
	\item \label{TFAE3} $\left| \Cn \right|$  is a power of the greatest prime divisor of $n$.
\end{enumerate}  
%Suppose that the set $S$ of prime divisors of $\Cn$ has more than one element. Then  $n=\Che(a)$ for some $p\in S$. 
%The number $d=\gcd(\Cn,n)$ equals either $1$, or $4$, or the greatest prime divisor of $n$. A prime number $p$ such that $T_{n}(a) = 1 \mod p$ and $T_{k}(a) \ne 1 \mod p$ for every positive integer $k<n$ does not exists if and only if $d>0$ and $\Cn$ is a power of $d$.
\end{cor}

\begin{proof}
	By the definition of $\C$'s we have $\left| \Cn \right| >1$. \\
	
	First we show that \ref{TFAE1} implies \ref{TFAE2}. Let  $p_{1}, p_{2}$ be prime divisors of  $\Cn$. By Theorem \ref{Satz 1} we have $n=\CheOrd_{p_{1}}(a) p_{1}^{i_{1}}=\CheOrd_{p_{2}}(a) p_{2}^{i_{2}}$. Since neither $\CheOrd_{p_{1}}(a)$ nor $\CheOrd_{p_{2}}(a)$ equals $n$, we get $i_{1}, i_{2}>0$. Thus both $p_{1}, p_{2}$ are the greatest prime divisor of $n$, i.e., $p_{1}=p_{2}$. So \ref{TFAE2} holds.\\
	
	Now we show that \ref{TFAE2} implies \ref{TFAE1}.
	
	Let $\left| \Cn \right|$ be a power of a prime number $p$ coprime to $n$. By Theorem \ref{Satz 1} we have $\Che(a)=n$.
	
	%If $\left| \Cn \right| =1$ then a primitive prime divisor does not exist by Corollary \ref{if T then Omega}.\\
	
	Now let $\left| \Cn \right|$  be a power of a prime number $p$ dividing $n$. By Corollary \ref{if T then Omega}, if a primitive prime divisor exists, it equals $p$. Consider two cases.
	
	First, let $p$ be odd. By Lemma \ref{prop Che} we have that $\Che(a)$ is coprime to $p$. So by Theorem \ref{Satz 1} we get that $n=\Che(a) p^{i}$ with $i>0$. Thus $n\ne \Che(a)$.  
	
	Now let $p=2$. By Lemma \ref{prop Che} the possible values of  $\CheOrd_{2}(a)$ are $1$ or $2$. Hence if $n=\CheOrd_{2}(a)$ then $n=2$. But $\C_{2}(a)=2(a+1)$ can be a power of $2$ only for odd $a$, and for such $a$ we have $\CheOrd_{2}(a)=1$.
	
	Thus in both cases a primitive prime divisor does not exist.\\
	
	Hence the equivalence of the statements \ref{TFAE1} and \ref{TFAE2} is established, and Theorem \ref{Satz 1} immediately gives their equivalence to \ref{TFAE3}.  
	
\end{proof}

\begin{proof}[Proof of Theorem \ref{Satz 2}] Let $n>1$ and $\left| a\right| >1$. Suppose that a primitive prime divisor does not exist, and that $n\notin\left\lbrace2,3,4,6 \right\rbrace $. By Corollary \ref{wniosek} \ref{TFAE2} and Theorem \ref{Satz 1} we get $\left| \Cn\right| =p$ for a prime number $p$ dividing $n$. Hence  
%For  $1\le k < \frac{n}{2}$  we have $\left|  a-\cos\tfrac{2 k \pi}{n} \right| > \left| a\right| -1  $. Since $p\mid n$, we get $p-1=\varphi(p)\mid \varphi(n)$. 
	\[p=\left| \Cn\right| = \prod_{
		\substack{
			1\le k \le \frac{n}{2}\\
			\gcd(k,n)=1}}2\left| a-\cos\tfrac{2 k \pi}{n}\right| >\left( 2 (\left| a\right| -1) \right) ^{\frac{\varphi(n)}{2}} \ge \left(2 (\left| a\right| -1) \right) ^{\frac{p-1}{2}}, \]
so we have two bounds:
\[p>\left( 2 (\left| a\right| -1) \right) ^{\frac{\varphi(n)}{2}}\]	
and
\[p>\left(2 (\left| a\right| -1) \right) ^{\frac{p-1}{2}}.\]	
The second bound implies $\left| a\right| =2$ and $p\in \left\lbrace 2,3,5\right\rbrace$. By Corollary \ref{wniosek} \ref{TFAE3} we have  for $p=2$  that $n$ is divisible by $8$, and for $p=3$ that $n$ is divisible by $9$ or $12$.  Thus for $p \in \left\lbrace2,3 \right\rbrace $ we have $\varphi(n)\ge 4$  and we get a contradiction with the first bound. Hence $p=5$.
Suppose $5^{2}\mid n$. This implies that $\varphi(n)\ge 20$ and again we get a contradiction with the first bound. 
%This means that $p\mid \tfrac{n}{p}$. Thus by Proposition \ref{omegas prop b} we have $\C_{n}(x)=\C_{\frac{n}{p}p}(x)=\C_{\frac{n}{p}}(T_{p}(x))$. Using this formula and the fact that $\left| T_{p}(a)\right|>1$,  we get as above \[p=\left| \C_{n}(a)\right| =\left| \C_{\frac{n}{p}}(T_{p}(a))\right| >\left( 2 (\left| T_{p}(a)\right| -1) \right) ^{\frac{\varphi\left( \frac{n}{p}\right) }{2}}\ge \left( 2 (\left| T_{p}(a)\right| -1) \right) ^{\frac{p-1 }{2}}.\]	 But $\left| T_{3}(\pm 2)\right| -1=25$ and $\left| T_{5}(\pm 2)\right| -1=361$, a contradiction. 
Hence $n=5\cdot \CheOrd_{5}(a)$ by Theorem \ref{Satz 1}. 

Assume $a=2$. We have  $\CheOrd_{5}(2)=3$, so $n=15$. But $\C_{15}(2)=5\cdot 29$ is not a prime number, a contradiction.	

Assume $a=-2$. We have $\CheOrd_{5}(-2)=6$, so $n=30$. But $\C_{30}(-2)=5\cdot 29$ is not a prime number, a contradiction.	

%Now let $\Cn$ be a power of of $2$. Suppose $n\notin\left\lbrace2,3,4,6 \right\rbrace $. By Lemma \ref{prop Che} and Theorem \ref{Satz 1} we get that $n=2^{i}$, $i \ge 3$, and $\Cn=2$. We use again the identity $\C_{2^{i}}=2 T_{2^{i-2}}$. For $a\ge 2$ the sequence $T_{n}(a)$ is strictly increasing and $T_{0}(a)=1$. Thus $\Cn>2$, a contradiction.\\

Hence the exceptional cases can appear only for $n\in\left\lbrace2,3,4,6 \right\rbrace $. We obtain the values of corresponding $a$'s by examining $\C_{2}(a)=2(a+1)$, $\C_{3}(a)=2a+1$, $\C_{4}(a)=2a$, and $\C_{6}(a)=2a-1$, according to Corollary \ref{wniosek} \ref{TFAE3}.  \\

It remains to analyse the trivial cases.

$T_{1}(a)-1=\pm1$ if and only if $a \in \left\lbrace 0,2\right\rbrace $.

Evaluating the sequence $(T_{n}(a)-1)_{n=1}^{\infty}$ at $a=1$ we get $0, 0, \ldots$, so every prime number is a primitive prime divisor of the first term.

Evaluating the sequence $(T_{n}(a)-1)_{n=1}^{\infty}$ at $a=0$ we get $-1, -2, -1, 0, -1, -2, -1, 0, \ldots$, so the first and the third term have no primitive prime divisor, $2$ is the primitive prime divisor of the second term, and every odd prime number is a primitive prime divisor of the fourth term.

Evaluating the sequence $(T_{n}(a)-1)_{n=1}^{\infty}$ at $a= -1$ we get $-2, 0, -2, 0, \ldots$, so $2$ is the primitive prime divisor of the first term, and every odd prime number is a primitive prime divisor of the second term.
\end{proof}

\section*{Acknowlegements} We are grateful to Bart{\l}omiej Bzd\k{e}ga for suggestions regarding the proof of Proposition \ref{poly square int}, and to Bartosz Naskr\k{e}cki for insightful remarks on the first version of the manuscript.

The Magma online calculator has been more than helpful. We have found the formulas for the coefficients in Proposition \ref{T on x plus one minus one by x} thanks to OEIS database.

%-----------------BIBLIOGRAPHY----------

\bibliographystyle{plain}

\end{document}